\tikzset{every loop/.style={min distance=10mm,looseness=10}}
\tikzset{every state/.style={minimum size=2mm}}
\newtheorem{theorem}{Theorem}
\newtheorem{lemma}[theorem]{Lemma}
\newtheorem{conjecture}{Conjecture}
\newtheorem{Fact}[theorem]{Observation}
\title{On the representation number of grid graphs and cylindric grid graphs}
\author{Nawaf Shafi Alshammari\footnote{Department of Mathematics and Statistics, University of Strathclyde, 26 Richmond Street, Glasgow G1, 1XH, United Kingdom. 
{\bf Email:} nawaf.alshammari@strath.ac.uk.},\ 
Sergey Kitaev\footnote{Department of Mathematics and Statistics, University of Strathclyde, 26 Richmond Street, Glasgow G1, 1XH, United Kingdom. 
{\bf Email:} sergey.kitaev@strath.ac.uk.}\ \ and Artem Pyatkin\footnote{Sobolev Institute of Mathematics, Koptyug ave, 4, Novosibirsk, 630090, Russia\  {\bf Email:} artem@math.nsc.ru.}}
\begin{document}
	\maketitle

\begin{abstract}
The representation number of a graph is the minimum number of copies of each vertex required to represent the graph as a word, such that the letters corresponding to vertices $x$ and $y$ alternate if and only if $xy$ is an edge in the graph. It is known that path graphs, circle graphs, and ladder graphs have representation number 2, while prism graphs have representation number 3.

In this paper, we extend these results by showing that generalizations of the aforementioned graphs---namely, the $m \times n$ grid graphs and $m \times n$ cylindrical grid graphs---have representation number $3$ for $m \geq 3$ and $m \geq 2$, respectively, and $n \geq 3$. Furthermore, we discuss toroidal grid graphs in the context of word-representability, which leads to an interesting conjecture.\\[-3mm]

\noindent
{\bf Keywords:} Grid graph, cylindric grid graph, toroidal grid graph, representation number, word-representable graph
		\end{abstract}	

\section{Introduction}
An $m\times n$ {\em grid graph}  $\mbox{Gr}_{m,n}$ has vertex set $V=\{x_{ij}\ |\ i=1,\ldots,m, j=1,\ldots,n\}$ and edge set 
$E=\{x_{ij}x_{i,j+1}\ |\  i=1,\ldots,m, j=1,\ldots,n-1\} \cup \{x_{ij}x_{i+1,j}\ |\  i=1,\ldots,m-1, j=1,\ldots,n\}$. A {\em cylindric grid graph} $\mbox{CGr}_{m,n}$ is obtained from $\mbox{Gr}_{m,n}$ by adding edges $x_{i,1}x_{i,n}$ for all $i=1,\ldots,m$, where we assume that $n\geq 3$. For instance, the grid graph $\mbox{Gr}_{3,5}$ and the cylindric grid graph $\mbox{CGr}_{3,5}$ are shown in Figure~\ref{3x5-grid-graph}. In this paper, w.l.o.g., we assume that $m\leq n$. 

Note that $\mbox{Gr}_{1,n}=P_n$ is the path graph on $n$ vertices,  and $\mbox{Gr}_{2,n}$  is called a {\em ladder graph}. Hence, $\mbox{Gr}_{m,n}$ is a generalization of path graphs and ladder graphs. On the other hand, the cylindric grid graph  $\mbox{CGr}_{m,n}$ is a generalization of the cycle graph $C_n$ (the case of $m=1$) and the prism $\mbox{Pr}_n$ (the case of $m=2$).  Note also that the graphs $\mathrm{Gr}_{m,n}$ and $\mathrm{CGr}_{m,n}$ are the Cartesian products of the path graph $P_m$ with $P_n$ and $C_n$, respectively.

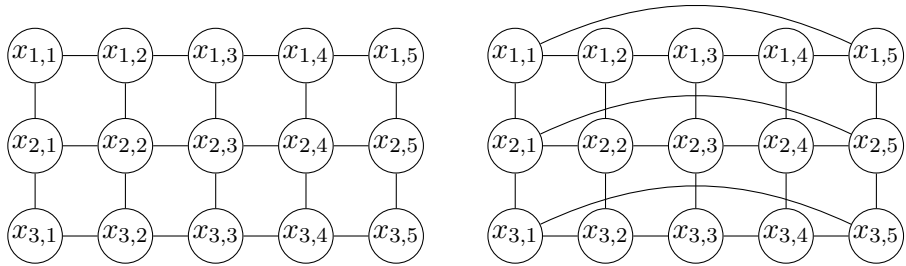
\begin{figure}
\begin{center}
\begin{tabular}{ccc}
\begin{tikzpicture}[scale=1.2, every node/.style={circle, draw, inner sep=0.5pt}]
  \def\rows{3}
  \def\cols{5}

  \foreach \i in {1,2,3} {
    \foreach \j in {1,2,3,4,5} {
      \node (x\i\j) at (\j,-\i) {$x_{\i,\j}$};
    }
  }

  \foreach \i in {1,2,3} {
    \foreach \j in {1,2,3,4} {
      \draw (x\i\j) -- (x\i\the\numexpr\j+1\relax);
    }
  }

  \foreach \i in {1,2} {
    \foreach \j in {1,2,3,4,5} {
      \draw (x\i\j) -- (x\the\numexpr\i+1\relax\j);
    }
  }
\end{tikzpicture}
& &
\begin{tikzpicture}[scale=1.2, every node/.style={circle, draw, inner sep=0.5pt}]
  \def\rows{3}
  \def\cols{5}

  \foreach \i in {1,2,3} {
    \foreach \j in {1,2,3,4,5} {
      \node (x\i\j) at (\j,-\i) {$x_{\i,\j}$};
    }
  }

  \foreach \i in {1,2,3} {
    \foreach \j in {1,2,3,4} {
      \draw (x\i\j) -- (x\i\the\numexpr\j+1\relax);
    }
  }

  \foreach \i in {1,2} {
    \foreach \j in {1,2,3,4,5} {
      \draw (x\i\j) -- (x\the\numexpr\i+1\relax\j);
    }
  }
  
\draw[bend left=25] (x11) to (x15); 
\draw[bend left=25] (x21) to (x25); 
\draw[bend left=25] (x31) to (x35); 
  
\end{tikzpicture}

\end{tabular}
\caption{The grid graph $\mbox{Gr}_{3,5}$ and the cylindric grid graph $\mbox{CGr}_{3,5}$}\label{3x5-grid-graph}
\end{center}
\end{figure}

A graph $G=(V,E)$ is {\em word-representable} if there exists a word $w$ over the alphabet $V$ such that two distinct letters $x$ and $y$ alternate in $w$ if and only if $xy\in E$. It is known \cite{KL15,KP18} that any word-representable graph $G$  is {\em $k$-word-representable} for some $k$; that is, there exists  a word $w$ representing $G$ in which each letter appears exactly $k$ times. The minimum such $k$ is called the {\em representation number} of $G$, and it is denoted by $\mathcal{R}(G)$. For a non-word-representable graph $G$, by definition, $\mathcal{R}(G)=\infty$.

The representation number of graphs has been studied in the literature for various classes of graphs -- for example,  {\em crown graphs} \cite{GKP18,HHMO24} and the {\em $k$-dimensional cube} \cite{Bro18,BroZan19,HHMO24}. It is known \cite{KL15,KP18} that the class of {\em circle graphs}, excluding complete graphs, coincides precisely with the class of graphs having representation number 2. In particular, $\mathcal{R}(P_n)=\mathcal{R}(\mbox{Gr}_{1,n})=2$ and $\mathcal{R}(C_n)=\mathcal{R}(\mbox{CGr}_{1,n})=2$ for $n\geq 3$. Of importance to our work are studies on graphs with representation number 3, as conducted in \cite{Kit13,KitPya08} (see also Section~5.2 in \cite{KL15} for a summary). In particular,  the {\em Petersen graph} and any prism $\mbox{Pr}_n$ have representation number 3. The latter fact (presented in \cite{KitPya08}) shows that $\mathcal{R}(\mbox{CGr}_{m,n})\geq 3$ for $m\geq 2$ (and $n\geq 3$) since then $\mbox{Pr}_n$ is an induced subgraph of $\mbox{CGr}_{m,n}$. But is it true that  $\mathcal{R}(\mbox{CGr}_{m,n})=3$ for any $m\geq 2$ and $n\geq 3$?

Moreover, it is known that the representation number of a ladder graph is 2 \cite{Kit13},  so $\mathcal{R}(\mbox{Gr}_{2,n})=2$ (for $n\geq 2$) and  $\mathcal{R}(\mbox{Gr}_{2,1})=\mathcal{R}(\mbox{Gr}_{1,1})=1$ since then we deal with a complete graph. But what can be said about the representation number of a general grid graph $\mbox{Gr}_{m,n}$? 

The main results of this paper are the following two theorems, which extend the known results for the cases $m = 1, 2$.

\begin{theorem}\label{main1}  For $m,n\ge 3$, $\mathcal{R}(\mbox{Gr}_{m,n})=3$.\end{theorem}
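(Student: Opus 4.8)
\emph{The plan is to} establish the two inequalities $\mathcal{R}(\mbox{Gr}_{m,n})\ge 3$ and $\mathcal{R}(\mbox{Gr}_{m,n})\le 3$ separately.

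\emph{Lower bound.} By the characterization quoted above, a graph has representation number $2$ precisely when it is a non-complete circle graph, so $\mathcal{R}(\mbox{Gr}_{m,n})\ge 3$ is equivalent to $\mbox{Gr}_{m,n}$ not being a circle graph (it is certainly not complete). Circle graphs are closed under taking induced subgraphs, and $\mbox{Gr}_{3,3}$ is an induced subgraph of $\mbox{Gr}_{m,n}$ whenever $m,n\ge 3$; hence it suffices to show that $\mbox{Gr}_{3,3}$ is not a circle graph, i.e.\ that no double-occurrence word over its nine vertices represents it. I would argue via chord diagrams. View $\mbox{Gr}_{3,3}$ as its boundary $8$-cycle $v_1v_2\cdots v_8$ together with the central vertex $c$, which is adjacent exactly to $v_1,v_3,v_5,v_7$. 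In any chord diagram of $\mbox{Gr}_{3,3}$ the chord of $c$ splits the circle into two arcs; the chords $v_1,v_3,v_5,v_7$ each cross the chord of $c$ (they are its neighbours) and are pairwise non-crossing ($v_1,v_3,v_5,v_7$ are pairwise non-adjacent), so they appear ``nested'', their cyclic order on the second arc being the reverse of that on the first. Up to the automorphisms of $\mbox{Gr}_{3,3}$ together with reversing the circle there are only three possibilities for the order in which $v_1,v_3,v_5,v_7$ occur along one arc. In each of the three cases one shows that at least one of the chords $v_2,v_4,v_6,v_8$---each of which must have both endpoints in a single arc---cannot be placed so as to cross exactly its two cycle-neighbours among $v_1,\dots,v_8$ and nothing else. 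The resulting contradiction proves $\mbox{Gr}_{3,3}$ is not a circle graph, so $\mathcal{R}(\mbox{Gr}_{m,n})\ge 3$.

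\emph{Upper bound.} Here I would construct an explicit word $w$ in which every vertex of $\mbox{Gr}_{m,n}$ occurs exactly three times and in which two letters alternate if and only if the corresponding cells are grid-adjacent. The starting point is the standard $2$-uniform word representing the path $P_n$; laying down one such layer for each of the $m$ rows (each a copy of $P_n$) realizes all horizontal (row) edges, and the remaining occurrences are inserted so as to additionally realize the vertical edges $x_{ij}x_{i+1,j}$ while destroying alternation between every pair of cells lying in distinct rows \emph{and} distinct columns, as well as between cells two or more apart within a common row or column. Since the ladder $\mbox{Gr}_{2,n}$ is already $2$-representable, it is natural to build $w$ by processing one row at a time, each new row contributing a fresh path together with one matching's worth of new edges. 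Verifying correctness then divides into: pairs in the same row (which reduce to the $P_n$ word), pairs in the same column, and ``diagonal'' pairs in different rows and different columns; only the last requires real care.

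\emph{The main obstacle.} The lower bound is the lighter part: once the reduction to $\mbox{Gr}_{3,3}$ is in place, the three-case chord-diagram argument is short. I expect the real effort to be the upper bound---both designing a single word that realizes all $\Theta(mn)$ edges while killing all of the (far more numerous) non-edges, and then verifying it uniformly in $m$ and $n$. The diagonal non-edges are the crux: representing the rows and the columns in isolation is easy, but naive combinations tend to create spurious alternations between cells sharing neither a row nor a column, so the interleaving of the three copies of each vertex must be chosen with exactly this point in mind.
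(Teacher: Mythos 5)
Your lower bound is sound, and it is essentially the paper's own argument in different clothing: the paper also reduces to $\mbox{Gr}_{3,3}$ (as an induced subgraph of every larger grid), also exploits the fact that $x_{2,2}$ has four pairwise non-adjacent neighbours --- which forces, up to symmetry, exactly three possible 2-uniform patterns for its closed neighbourhood --- and also derives the contradiction from a corner vertex ($x_{3,1}$, which is forced to alternate with the non-neighbour $x_{1,2}$). Phrasing this via chord diagrams rather than 2-uniform words changes nothing of substance, and your count of three cases modulo the dihedral symmetry and reversal checks out against the paper's three listed words.

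The genuine gap is the upper bound, which is where essentially all the work in this theorem lives, and your proposal stops exactly where the proof has to begin. Saying that the remaining occurrences ``are inserted so as to additionally realize the vertical edges while destroying alternation between every pair of cells lying in distinct rows and distinct columns'' restates the goal rather than giving a construction: you supply neither the word nor the insertion rule, and the difficulty you correctly identify (spurious diagonal alternations) is precisely what a naive row-by-row layering fails to avoid. For comparison, the paper's Lemma~\ref{main} starts not from the 2-uniform representant of $P_n$ but from a specific 3-uniform one, $W=x_1^1x_2^1x_1^2x_3^1x_2^2x_1^3\ldots$, isolates two order properties of $W$ together with an inductive invariant (the restriction of $w_m$ to row $m$ is again a copy of $W$, and certain two-letter factors $x_{j,2t+1}^2x_{j,2t}^3$ survive into $w_j$), and then inserts the letters of row $m$ at exactly those surviving factors of $w_{m-1}$ via explicit substitution rules; that invariant is what lets the verification --- that each $x_{m,j}$ alternates with $x_{m-1,j}$ and with nothing else outside its own row --- close uniformly in $m$ and $n$. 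None of this (the choice of a 3-uniform $W$, the location of the insertions, or the invariant that makes the induction go through) is recoverable from your outline, so the proposal does not yet establish $\mathcal{R}(\mbox{Gr}_{m,n})\le 3$.
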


\begin{theorem}\label{main2}  For $m,n\ge 3$, $\mathcal{R}(\mbox{CGr}_{m,n})=3$.\end{theorem}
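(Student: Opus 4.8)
The plan is to establish the two inequalities $\mathcal R(\mbox{CGr}_{m,n})\ge 3$ and $\mathcal R(\mbox{CGr}_{m,n})\le 3$ separately, the second one simultaneously showing that $\mbox{CGr}_{m,n}$ is word-representable at all.

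For the lower bound, note that for $m\ge 3$ the subgraph of $\mbox{CGr}_{m,n}$ induced by the $2n$ vertices of two consecutive rows $i$ and $i+1$ has precisely the edge set of the prism $\mbox{Pr}_n=\mbox{CGr}_{2,n}$, so $\mbox{CGr}_{m,n}$ contains $\mbox{Pr}_n$ as an induced subgraph. Since deleting from a representing word all letters outside an induced subgraph produces a word representing that subgraph in which every surviving letter keeps its number of occurrences, the representation number does not increase under taking induced subgraphs; hence $\mathcal R(\mbox{CGr}_{m,n})\ge\mathcal R(\mbox{Pr}_n)=3$ by the Kitaev--Pyatkin result \cite{KitPya08}. (Equivalently, $\mbox{CGr}_{m,n}$ contains a non-circle graph as an induced subgraph, hence is not itself a circle graph, and being non-complete it cannot be represented using two copies of each letter.)

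For the upper bound I would exhibit an explicit word $w$ of length $3mn$ in which each letter $x_{ij}$ occurs exactly three times, and then verify that two letters alternate in $w$ if and only if the corresponding vertices are adjacent in $\mbox{CGr}_{m,n}$. I would organise $w$ by columns, as a concatenation of blocks built from the $m$ vertices of each column, arranged so that: (i) within a single column the three occurrences realise the path $P_m$ (consecutive rows alternate, rows at distance at least $2$ do not); (ii) two consecutive columns interact exactly through the rung edges $x_{ij}x_{i,j+1}$ and in no other way; and (iii) the blocks of the first and the last column are placed at the two ends of $w$ and arranged so that, in addition, $x_{i1}$ and $x_{in}$ alternate for every $i$. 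Feature (iii) is exactly what promotes each row from the path $P_n$ to the cycle $C_n$, and deleting it would leave a word for $\mbox{Gr}_{m,n}$ of the type used to prove Theorem~\ref{main1}. The verification then splits into the usual cases: two vertices in the same column, two vertices in the same row (now a cycle, so including the wrap-around pair $x_{i1}x_{in}$), and two vertices lying in different rows and different columns.

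The step I expect to be the main obstacle is reconciling feature (iii) with the ``diagonal'' non-adjacencies. Forcing $x_{i1}$ to alternate with $x_{in}$ for all $i$ requires occurrences of first- and last-column letters to be moved past many others, and one must check that no spurious alternation is created --- such as $x_{i1}\sim x_{i-1,n}$ or $x_{i1}\sim x_{i,n-1}$ --- and that no required alternation is destroyed --- such as $x_{i1}\sim x_{i2}$ or $x_{i1}\sim x_{i+1,1}$. Keeping all of these under control simultaneously at the ``seam'' of the cylinder, uniformly in $m\ge 3$ and $n\ge 3$ (with the smallest cases perhaps exhibited by hand), is the delicate part; away from the seam the word behaves exactly as in the grid case, so the remaining alternation checks reduce to the routine verifications already carried out in the proof of Theorem~\ref{main1}.
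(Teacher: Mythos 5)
Your lower bound is fine and is exactly the paper's: $\mbox{Pr}_n=\mbox{CGr}_{2,n}$ sits inside $\mbox{CGr}_{m,n}$ as an induced subgraph (already for $m\ge 2$, not just $m\ge 3$), representation number is monotone under induced subgraphs, and $\mathcal{R}(\mbox{Pr}_n)=3$ by \cite{KitPya08}.

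The upper bound, however, is not a proof but a plan, and the part you defer is the entire content of the theorem. You never write down the $3$-uniform word $w$; you only describe properties (i)--(iii) it should have, and you yourself flag that reconciling (iii) with the diagonal non-adjacencies at the seam is ``the delicate part'' that you have not carried out. There is no argument that a word with all three properties exists, so nothing has been established. Note also that your proposed organisation --- concatenating column blocks, each realising a copy of $P_m$, and then forcing $x_{i,1}$ to alternate with $x_{i,n}$ across the whole word --- is the transpose of what the paper does, and it concentrates all the cyclic difficulty into one global seam between the first and last blocks. The paper avoids this: it recurses on the rows, so that each new row $m$ contributes a subword equal to one of two fixed $3$-uniform representants of $C_n$ (called \texttt{Od} and \texttt{Ev}, used alternately according to the parity of $m$), meaning the wrap-around edge $x_{m,1}x_{m,n}$ is handled \emph{inside} a single row's subword, and the only interface to control at each step is between rows $m-1$ and $m$ (plus checking non-adjacency to earlier rows). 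The paper also needs a separate construction for $n=3$ (Theorem~\ref{cyl3}) alongside the general one for $n\ge 4$ (Theorem~\ref{cyl4}); your sketch does not distinguish these cases. Until you exhibit a concrete word and verify the alternations --- or at least prove that your block scheme can be realised --- the upper bound remains open in your write-up.
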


The paper is organized as follows. In Sections~\ref{Gr-sec} and~\ref{CGr-sec}, we provide the proofs of Theorem~\ref{main1} (which follows from Lemmas~\ref{lem-Gr33} and~\ref{main}) and Theorem~\ref{main2} (which follows from Theorems~\ref{cyl3} and~\ref{cyl4}, as well as the non-2-representability of prisms mentioned above), respectively. In Section~\ref{open}, we briefly discuss the word-representability of toroidal grid graphs and state a conjecture on this topic.

\section{3-representability of grid graphs}\label{Gr-sec}

We begin with proving the lower bound on the representation number of the grid graphs, that follows from the fact that  $\mbox{Gr}_{3,3}$ is an induced subgraph of any larger grid graph and the next lemma.

\begin{lemma}\label{lem-Gr33} We have $\mathcal{R}(\mbox{Gr}_{3,3})\geq 3$. \end{lemma}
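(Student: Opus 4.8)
The plan is to show that no word in which every letter appears exactly twice can represent $\mbox{Gr}_{3,3}$. Label the nine vertices $x_{ij}$ for $i,j\in\{1,2,3\}$, so that the graph is the $3\times 3$ grid: the four corners $x_{11},x_{13},x_{31},x_{33}$ have degree $2$, the four edge-midpoints $x_{12},x_{21},x_{23},x_{32}$ have degree $3$, and the center $x_{22}$ has degree $4$. Suppose for contradiction that a $2$-uniform word $w$ represents $\mbox{Gr}_{3,3}$. First I would record the standard fact about $2$-uniform words: if each of $x$ and $y$ occurs twice, they alternate in $w$ iff their four occurrences appear in the cyclic-friendly pattern $xyxy$ or $yxyx$ (equivalently, the two blocks $[\,$first$x$, second$x\,]$ and $[\,$first$y$, second$y\,]$ either are disjoint or are nested — no, that is the \emph{non}-edge condition); so $xy$ is an edge iff the two "intervals" determined by the pairs of occurrences \emph{interleave} (cross), and $xy$ is a non-edge iff these intervals are nested or disjoint. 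Viewing each letter as a chord of a circle (connecting its two occurrences on the cycle obtained by closing up $w$), $w$ represents a graph iff that graph is the circle graph of this chord diagram; so it suffices to prove $\mbox{Gr}_{3,3}$ is \emph{not} a circle graph.

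The main work, then, is to derive a contradiction from a hypothetical chord diagram (equivalently, double occurrence word) realizing $\mbox{Gr}_{3,3}$. I would proceed by a localized case analysis anchored at the center vertex $c=x_{22}$, whose chord must cross exactly the four chords $x_{12},x_{21},x_{23},x_{32}$ and be nested-or-disjoint with the four corner chords. The chord $c$ splits the circle into two arcs; each of the four neighbor-endpoints has exactly one endpoint on each arc (that is what "crosses $c$" means), while each corner chord lies entirely within one arc. Now use the edges \emph{among} the neighbors of $c$: in $\mbox{Gr}_{3,3}$ the four degree-$3$ vertices form an independent set (no two edge-midpoints of the $3\times 3$ grid are adjacent), so the four chords $x_{12},x_{21},x_{23},x_{32}$ are pairwise non-crossing, yet each crosses $c$. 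Pairwise non-crossing chords that all cross a fixed chord $c$ must, on each of the two arcs cut out by $c$, appear in a consistent nested/linear order; I would argue this forces their eight endpoints to interleave with the two endpoints of $c$ in essentially one cyclic pattern up to symmetry, pinning down the relative position of $x_{12},x_{21},x_{23},x_{32}$ around the circle. Then I bring in the corners: e.g.\ $x_{11}$ is adjacent to $x_{12}$ and $x_{21}$ but not to $x_{23}$, $x_{32}$, nor $c$; this says the chord $x_{11}$ crosses exactly $x_{12}$ and $x_{21}$. Checking which pairs of the "outer" non-crossing chords can be simultaneously crossed by a single further chord that avoids $c$ and the other two quickly runs out of room, and the contradiction appears.

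I expect the genuine obstacle to be organizing the case analysis so it is clearly exhaustive rather than merely plausible — circle-graph nonrepresentability arguments are easy to get \emph{almost} right. To keep it clean I would (i) fix $c$ and orient the circle, (ii) encode the position of every other chord by which of the two $c$-arcs each of its endpoints lies on, reducing the problem to a finite combinatorial configuration, and (iii) exploit the symmetry group of $\mbox{Gr}_{3,3}$ (dihedral, order $8$) to cut the number of cases down drastically. An alternative, possibly shorter, route is to invoke a known forbidden-subgraph or obstruction characterization: $\mbox{Gr}_{3,3}$ contains (or after a short reduction reveals) one of the Bouchet obstructions to circle graphs, e.g.\ $W_5$, $W_7$, or $BW_3$, so it is not a circle graph and hence $\mathcal{R}(\mbox{Gr}_{3,3})\neq 2$; combined with the trivial $\mathcal{R}(\mbox{Gr}_{3,3})\geq 2$ (the graph is not complete, so $\mathcal{R}\neq 1$) this gives $\mathcal{R}(\mbox{Gr}_{3,3})\geq 3$. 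I would present the direct chord-diagram argument as the main proof and mention the obstruction-theoretic shortcut as a remark.
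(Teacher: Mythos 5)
Your overall strategy is the same as the paper's: reduce to showing that $\mbox{Gr}_{3,3}$ is not $2$-representable, view a hypothetical $2$-uniform representant as a chord diagram, anchor the analysis at the centre $x_{2,2}$, exploit the fact that its four neighbours form an independent set to pin down the local configuration, and then derive a contradiction from a corner vertex. The paper carries this out by listing, up to the dihedral symmetry of the grid, the three possible $2$-uniform words representing the closed neighbourhood of $x_{2,2}$ and checking that $x_{3,1}$ cannot be inserted.

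The gap is that the decisive step is asserted rather than proved: ``quickly runs out of room, and the contradiction appears'' is exactly the part where the content of the lemma lives, and you yourself flag that making the case analysis exhaustive is the real difficulty. The step is genuinely fillable, and in fact without any enumeration of cases. Since the four neighbour chords pairwise do not cross but each crosses $c=x_{2,2}$, the cyclic order of their endpoints is forced (up to relabelling) to be $c\,a_1a_2a_3a_4\,c\,a_4a_3a_2a_1$. Any chord $K$ that does not cross $c$ has both endpoints on one of the two arcs determined by $c$, and on that arc the chords $a_1,\ldots,a_4$ leave a single endpoint each, in the order $a_1,a_2,a_3,a_4$ (or its reverse); hence the set of $a_i$ crossed by $K$ is a set of \emph{consecutive} indices. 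Each corner must cross exactly two of the $a_i$, so it must cross one of the three consecutive pairs $\{a_1,a_2\},\{a_2,a_3\},\{a_3,a_4\}$; but the four corners of $\mbox{Gr}_{3,3}$ require four \emph{distinct} pairs of midpoints (forming a $4$-cycle on $\{x_{1,2},x_{2,1},x_{2,3},x_{3,2}\}$), which cannot all be consecutive in a single linear order. Supply this argument (or the paper's explicit three-word check) and the proof is complete. One further caution: your fallback via Bouchet's circle-graph obstructions is not a ``short reduction'' --- $\mbox{Gr}_{3,3}$ is bipartite, so $W_5$, $W_7$ and $BW_3$ cannot appear as induced subgraphs, and you would need to exhibit one as a vertex-minor via local complementations, which is real work.
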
 

\begin{proof} Clearly, $\mathcal{R}(\mbox{Gr}_{3,3})\neq 1$, so it is sufficient to prove that $\mathcal{R}(\mbox{Gr}_{3,3})\neq 2$. 

The vertex $x_{2,2}$ is adjacent with vertices $x_{1,2},x_{2,1},x_{2,3}, x_{3,2}$, which are pairwise non-adjacent. If $\mbox{Gr}_{3,3}$ is 2-representable, then by symmetry, the closed neighbourhood of the vertex $x_{2,2}$  is represented by one of the following three words:
$$x_{2,2}x_{2,1}x_{1,2}x_{2,3}x_{3,2}x_{2,2}x_{3,2}x_{2,3}x_{1,2}x_{2,1},$$
$$x_{2,2}x_{2,1}x_{1,2}x_{3,2}x_{2,3}x_{2,2}x_{2,3}x_{3,2}x_{1,2}x_{2,1},$$
$$ x_{2,2}x_{2,1}x_{2,3}x_{1,2}x_{3,2}x_{2,2}x_{3,2}x_{1,2}x_{2,3}x_{2,1}.$$
In either way, one of the two copies of $x_{3,1}$ must be between two $x_{3,2}$, while the other copy of  $x_{3,1}$  must be outside of two  $x_{2,1}$. But then the vertex  $x_{3,1}$  is adjacent with  $x_{1,2}$, 
which is a contradiction. Hence,  $\mathcal{R}(\mbox{Gr}_{3,3})\geq 3$.
\end{proof}

If a letter $x$ occurs several times in a word $w$, denote by $x^j$ the $j$th occurence of $x$ in $w$.  In what follows, a {\it factor} of a word is a contiguous subsequence of the word. For example, all factors of the word $w=abca$ are $a^1$, $b^1$, $c^1$, $a^2$, $a^1b^1$, $b^1c^1$, $c^1a^2$, $a^1b^1c^1$, $b^1c^1a^2$, and $a^1b^1c^1a^2$.
We write \( x^i < y^j \) if the \( i \)th occurrence of the letter \( x \) appears to the left of the \( j \)th occurrence of the letter \( y \) in the word. In this case, we denote by \( [x^i, y^j] \) the factor induced by all letters lying between \( x^i \) and \( y^j \).

The next observation follows directly from the definitions.

\begin{Fact}~\label{evident}
If $xy\in E$ then $x^i>y^{i-1}$ and $y^i>x^{i-1}$ for all $i$.
\end{Fact}

Next, we prove that every grid graph is 3-representable. Let
$$W=x_1^1x_2^1x_1^2x_3^1x_2^2x_1^3x_4^1x_3^2x_2^3x_5^1x_4^2x_3^3\ldots x_{n}^1x_{n-1}^2x_{n-2}^3x_{n}^2x_{n-1}^3x_{n}^3.$$

Clearly, \( W \) is 3-uniform and represents the path \( P_n \) having the vertices in $\{x_1,x_2,\ldots,x_n\}$. Let $k=\lfloor n/2 \rfloor$. The following properties of the word $W$ are easy to check:

(a) For every $t = 2, \ldots, k$ the inequality $x_{2t}^2 > x_{2t-2}^3$ holds;

(b) For every $t = 1, \ldots, n-1$ and $i=1,2,3$ the inequality $x_t^i < x_{t+1}^i$ holds.



\begin{lemma}\label{main}  The grid graph $\mathrm{Gr}_{m,n}$ is $3$-representable for every $n \ge 3$. Moreover, there exists a word-representant such that for every $j=1,\ldots,m$

{\rm (1)} the subword $W_j$ induced by $x_{j,1},\ldots,x_{j,n}$ coincides with $W$ after replacing $x_{j,i}$ by $x_i$ for all $i$;

{\rm (2)} there are factors  $x_{j,2t+1}^2 x_{j,2t}^3$ for all $t = 1, \ldots, k - 1$ as well as the factor $x_{j,n}^2x_{j,n-1}^3$.
\end{lemma}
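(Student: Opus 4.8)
The plan is to construct the word-representant for $\mathrm{Gr}_{m,n}$ by stacking $m$ copies of the word $W$—one for each row $j$—interleaved in a controlled way, and then verify that the alternation property holds exactly on the edges of the grid. Concretely, I would process the word $W$ from left to right and, at each occurrence $x_i^\ell$ appearing in $W$, emit a short block consisting of the corresponding occurrences $x_{1,i}^\ell, x_{2,i}^\ell, \ldots, x_{m,i}^\ell$ (in increasing row order), or possibly in a row order that alternates with the parity of the block so that consecutive vertical neighbours alternate correctly. The resulting word $w$ is automatically $3$-uniform, and by construction the subword induced by row $j$ is exactly $W$ with $x_i\mapsto x_{j,i}$, giving property (1); property (2) about the factors $x_{j,2t+1}^2 x_{j,2t}^3$ and $x_{j,n}^2 x_{j,n-1}^3$ should follow because these factors already occur in $W$ (that is what properties (a) and (b) of $W$ encode, after relabelling $x_{2t}^2>x_{2t-2}^3$ appropriately and tracking where the relevant occurrences sit), and the block structure keeps the row-$j$ letters adjacent to each other in the same relative positions.

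The core verification splits into three kinds of pairs of vertices. First, horizontal edges $x_{j,i}x_{j,i+1}$ and horizontal non-edges within a fixed row: these hold because the induced subword is $W$, which represents $P_n$, so nothing new is needed. Second, vertical edges $x_{j,i}x_{j+1,i}$ and vertical non-edges $x_{j,i}x_{j',i}$ with $|j-j'|\ge 2$: here I would show that for each fixed column $i$, the subword induced by $\{x_{1,i},\ldots,x_{m,i}\}$ is (a relabelling of) the standard $3$-uniform word representing the path $P_m$, because the three blocks emitted at $x_i^1, x_i^2, x_i^3$ each list the rows in order (or alternating order) and no other letters from column $i$ appear in between—so vertical adjacencies are exactly the path edges. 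Third, and this is where the real work lies, the "mixed" pairs $x_{j,i}$ and $x_{j',i'}$ with $j\ne j'$ and $i\ne i'$, which are non-edges of the grid and must therefore be non-alternating in $w$.

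I expect the mixed non-adjacencies to be the main obstacle. The strategy is to use Observation~\ref{evident} together with the explicit factor information: to show two letters $x$ and $y$ do \emph{not} alternate, it suffices to exhibit an index $i$ with, say, $x^i x^{i+1}$ occurring as a factor of (a projection of) $w$ containing no $y$, or more flexibly to locate a factor of the form $[x^i, x^{i+1}]$ avoiding $y$, or two consecutive $y$'s bracketing a single $x$. Properties (a), (b) of $W$ and the guaranteed factors in property (2) are precisely the tools for this: for two vertices in different rows and different columns, one can pass to the row of the vertex with the smaller column index (or use property (b), $x_t^i<x_{t+1}^i$, to propagate order information across columns within one row) and then use a factor like $x_{j,2t+1}^2 x_{j,2t}^3$ or $x_{j,n}^2x_{j,n-1}^3$ from property (2) to trap one letter strictly between two consecutive copies of the other. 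I would organize this into a case analysis on whether the columns $i,i'$ have the same parity and on their distance, reducing everything to a bounded number of local configurations in $W$ that can be checked directly. Finally I would confirm the base case $n=3$ (and small $m$) separately if the general construction degenerates there, and note that the "moreover" clauses (1) and (2) are exactly what the construction delivers, so they need no extra argument beyond reading them off the block structure.
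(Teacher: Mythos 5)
There is a genuine gap, and it is fatal to the construction you propose rather than to a side detail. If you replace each occurrence $x_i^\ell$ of $W$ by a \emph{contiguous} block $B_i^\ell$ containing the letters $x_{1,i}^\ell,\ldots,x_{m,i}^\ell$ (in whatever internal order), then for any two distinct columns $i$ and $i'$ the relative order of $x_{j,i}^\ell$ and $x_{j',i'}^{\ell'}$ is determined entirely by the order of the blocks, i.e.\ by the order of $x_i^\ell$ and $x_{i'}^{\ell'}$ in $W$. Since $x_i$ and $x_{i+1}$ alternate in $W$ as $x_i^1x_{i+1}^1x_i^2x_{i+1}^2x_i^3x_{i+1}^3$, \emph{every} pair $x_{j,i}$, $x_{j',i+1}$ alternates in your word, for all rows $j\neq j'$. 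This creates all the diagonal ``edges'' $x_{j,i}x_{j',i+1}$, which are non-edges of $\mathrm{Gr}_{m,n}$, and no choice of within-block row orderings can repair this, because the internal order of a block is invisible to letters outside it. Your plan for the mixed pairs (trapping one letter between two consecutive copies of the other) cannot be executed: with contiguous blocks there is nothing of column $i+1$ strictly between two consecutive copies of $x_{j,i}$ other than a full block, which contains all of $x_{1,i+1}^\ell,\ldots,x_{m,i+1}^\ell$ at once. A secondary problem is the columns themselves: with a fixed row order in every block, column $i$ induces $(12\cdots m)^3$, which represents $K_m$, not $P_m$; with alternating orders it represents the empty graph. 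Getting $P_m$ from three permutation blocks amounts to realizing the fence poset by three linear orders, which is a separate (and unaddressed) issue, but the diagonal problem above already kills the approach.

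The paper avoids exactly this trap by \emph{not} keeping the new row's letters in contiguous blocks: $w_m$ is obtained from $w_{m-1}$ by splicing the occurrences of $x_{m,1},\ldots,x_{m,n}$ into $w_{m-1}$ at carefully chosen factors (namely around $x_{m-1,1}^2$, the factors $x_{m-1,2t+1}^2x_{m-1,2t}^3$ guaranteed by property~(2), and $x_{m-1,n}^3$), interleaving row $m$ with row $m-1$ finely enough that for each $x_{m,i}$ some interval $[x_{m,i}^\ell,x_{m,i}^{\ell+1}]$ meets $w_{m-1}$ only in occurrences of $x_{m-1,i}$ (plus letters ruled out by Observation~\ref{evident}). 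That bracketing is what kills all mixed pairs in one stroke, and it is precisely the structural feature your block construction destroys. If you want to salvage your approach, you would need to abandon contiguous blocks and specify an interleaving of the rows at the level of individual letters --- at which point you are essentially forced into something like the paper's recursive substitution.
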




\begin{proof}
We apply a recursive construction based on the value of $m$. At each step we obtain the word $w_i$ representing $\mathrm{Gr}_{i,2k}$ by merging the words $w_{i-1}$ (representing $\mathrm{Gr}_{i-1,2k}$) and $W$. In particular, for $m = 1$, we let
$$w_1=x_{1,1}^1x_{1,2}^1x_{1,1}^2x_{1,3}^1x_{1,2}^2x_{1,1}^3x_{1,4}^1x_{1,3}^2x_{1,2}^3\ldots x_{1,n}^1x_{1,n-1}^2x_{1,n-2}^3x_{1,n}^2x_{1,n-1}^3x_{1,n}^3.$$

Clearly, $w_1$  coincides with $W$ (after the replacement of letters) and satisfies property~(2) of Lemma~\ref{main}.

Having a 3-uniform word $w_{m-1}$ representing $\mbox{Gr}_{m-1,2k}$, we obtain the word $w_m$ by the following rules. 

We substitute $x_{m-1,1}^2$  by $x_{m,1}^1x_{m-1,1}^2x_{m,2}^1x_{m,1}^2$; for all $t=1,\ldots,k-1$ we substitute the factor 
$ x_{m-1,2t+1}^2x_{m-1,2t}^3$ by
$$x_{m,2t+1}^1x_{m-1,2t+1}^2x_{m,2t}^2x_{m,2t-1}^3x_{m,2t+2}^1x_{m-1,2t}^3x_{m,2t+1}^2x_{m,2t}^3. \eqno(1)$$
If \( n \) is odd, we additionally substitute the factor \( x_{m-1,n}^2 x_{m-1,n-1}^3 \) with
\[
x_{m,n}^1 x_{m-1,n}^2 x_{m,n-1}^2 x_{m-1,n-1}^3 x_{m,n-2}^3. \eqno{(2)}
\] Finally, we substitute
$x_{m-1,n}^3$  by $x_{m,n}^2x_{m,n-1}^3x_{m-1,n}^3x_{m,n}^3$.

It is straightforward to verify that $w_m\setminus w_{m-1}$ coincides with $W$ (after the replacement of letters) and property~(2) of Lemma~\ref{main} is true for $w_m$.


Let us show that $w_m$ represents $\mbox{Gr}_{m,n}$. 
Since $w_m\setminus w_{m-1}$ coincides with $W$, 
the vertices $x_{m,1},\ldots, x_{m,n}$ induce the path graph $P_n$.

By (1), for every $t=1,\ldots,k-1$ we have $[x_{m,2t}^2,x_{m,2t}^3]\cap w_{m-1}=\{x_{m-1,2t}^3\}$ and $[x_{m,2t+1}^1,x_{m,2t+1}^2]\cap w_{m-1}=\{x_{m-1,2t+1}^2,x_{m-1,2t}^3\}$. By  Observation~\ref{evident}, this means that $x_{m,2t}$ and $x_{m,2t+1}$ may be adjacent in $w_{m-1}$ only to $x_{m-1,2t}$ and $x_{m-1,2t+1}$, respectively. Due to 
$[x_{m,n}^2,x_{m,n}^3]\cap w_{m-1}=\{x_{m-1,n}^3\}$ and $[x_{m,1}^1,x_{m,1}^2]\cap w_{m-1}=\{x_{m-1,1}^2\}$, the same fact holds for the vertices $x_{m,n}$ and $x_{m,1}$. 
Finally, if $n$ is odd, then by~(2) $[x_{m,n-1}^2,x_{m,n-1}^3]\cap w_{m-1}=\{x_{m-1,n-1}^3\}$ and so, $x_{m,n-1}$ may be adjacent in $w_{m-1}$ only to $x_{m-1,n-1}$.
Let us prove that these edges indeed exist.

Clearly, for all  $t=1,\ldots,k$ we have $x_{m,2t}^3>x_{m-1,2t}^3>x_{m,2t}^2$.  By property~(a) of \( W \), we also have \( x_{m,2t}^1 < x_{m-1,2t-2}^3 < x_{m-1,2t}^2 \) for all \( t = 2, \ldots, k \). The inequality $x_{m,2}^1<x_{m-1,2}^2$ folows from the facts that $w_m$ has a factor $x_{m-1,1}^2x_{m,2}^1$ and $x_1^2<x_2^2$ in $W$.
By~(1) and property~(a) of $W$, $x_{m,2t}^2>x_{m-1,2t+1}^2>x_{m-1,2t}^2$ for all $t=1,\ldots,k-1$. The inequality $x_{m,2k}^2>x_{m-1,2k}^2$ follows from~(2) if $n$ is odd, or from the fact that $w_{m-1}$ ends with $x_{m-1,n}^3$ if $n$ is even.
For all $t=2,\ldots,k$ by (1) and Observation~\ref{evident}, $x_{m,2t}^1>x_{m-1,2t-1}^2>x_{m-1,2t}^1$. Finally, by  Observation~\ref{evident}, $x_{m,2}^1>x_{m-1,1}^2>x_{m-1,2}^1$. So, the edge $x_{m,2t}x_{m-1,2t}$ exists for each  $t=1,\ldots,k$.

Since $w_n$ has the factor  $x_{m,1}^1x_{m-1,1}^2x_{m,2}^1x_{m,1}^2$, we have $x_{m-1,1}^1<x_{m,1}^1<x_{m-1,1}^2<x_{m,1}^2<x_{m-1,1}^3$.
Similarly, by (1), $x_{m-1,2t+1}^1<x_{m,2t+1}^1<x_{m-1,2t+1}^2<x_{m,2t+1}^2<x_{m-1,2t+1}^3$ for all $t=1,\ldots,k-1$. Since $x_{m-1,2t+3}^2x_{m-1,2t+2}^3$ is a factor in $w_{m-1}$ by property~(2) and 
$x_{m-1,2t+1}^3<x_{m-1,2t+3}^3$, we have $x_{m-1,2t+3}^2>x_{m-1,2t+1}^3$. Hence, by (1), for all $t=0,\ldots,k-2$ we get $x_{m,2t+1}^3>x_{m-1,2t+3}^2>x_{m-1,2t+1}^3$.
If $n$ is odd, then by~(2) and property~(b), $x_{m,n-2}^3> x_{m-1,n-1}^3>x_{m-1,n-2}^3$. Otherwise, $x_{m,n-1}^3>x_{m-1,n-1}^3$ because $w_n$ has a factor 
$x_{m,n-1}^3x_{m-1,n}^3$ and property~(b) holds for $W$. So, $x_{m,2t+1}x_{m-1,2t+1} \in E$ for all $t=0,\ldots,k-1$. Finally, if $n$ is odd then the inequalities
$x_{m-1,n}^1<x_{m,n}^1<x_{m-1,n}^2<x_{m,n}^2<x_{m-1,n}^3<x_{m,n}^3$ are easy to check directly.
\end{proof}

\section{3-representability of cylindric grid graphs}\label{CGr-sec}
In this section, we prove that $\mathrm{CGr}_{m,n}$ is $3$-representable for all $m, n \geq 3$. We begin with the case $n = 3$.

\begin{theorem}\label{cyl3}  Cylindrical grid graph $\mbox{CGr}_{m,3}$ is $3$-representable for all $m\geq 1$.\end{theorem}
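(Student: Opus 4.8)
The plan is to mimic the recursive ``row-by-row'' merging strategy of Lemma~\ref{main}, but now the base object in each row is a cycle $C_3$ rather than a path $P_n$, and the challenge is to simultaneously realize the three ``wrap-around'' edges $x_{j,1}x_{j,3}$ in each row while still being able to graft the next row on top. First I would fix a good $3$-uniform word $U$ representing a single copy of $C_3$ on vertices $\{y_1,y_2,y_3\}$ --- for instance a word of the form $y_1 y_2 y_3 y_1 y_2 y_3 y_1 y_2 y_3$ reordered so that all three pairs alternate; the natural choice $y_1y_2y_3y_1y_2y_3y_1y_2y_3$ itself already has every pair alternating, so $\mathcal{R}(C_3)\le 3$ trivially, and in fact $C_3=K_3$ gives $\mathcal{R}(C_3)=1$, so the real content is the vertical interaction. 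The key is to choose, for each row $j$, a specific $3$-uniform block $W_j$ on $\{x_{j,1},x_{j,2},x_{j,3}\}$ --- each $W_j$ a permuted copy of the same cyclic pattern --- and specific insertion points, so that (i) within $W_j$ all three horizontal/cyclic edges hold, (ii) consecutive rows $W_{j}$ and $W_{j+1}$ interleave to produce exactly the vertical edges $x_{j,i}x_{j+1,i}$ for $i=1,2,3$, and (iii) non-consecutive rows and ``diagonal'' pairs such as $x_{j,1}x_{j+1,2}$ do \emph{not} alternate.

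The concrete construction I would attempt: maintain the invariant, analogous to properties (1)--(2) of Lemma~\ref{main}, that the word $w_m$ representing $\mathrm{CGr}_{m,3}$ contains, for each row $j$, designated factors (say a factor $x_{j,2}^i x_{j,1}^{i}$ or similar ``landing strips'') into which the letters of row $j+1$ can be substituted, exactly as the factors $x_{j,2t+1}^2x_{j,2t}^3$ were used in Lemma~\ref{main}. Concretely, after writing down $w_{m-1}$ I would replace a small number of occurrences of row-$(m-1)$ letters by short factors containing all three letters $x_{m,1},x_{m,2},x_{m,3}$ in a fixed cyclic order, placed so that each $x_{m,i}$ straddles exactly one occurrence of $x_{m-1,i}$ (forcing the vertical edge and, by Observation~\ref{evident} and the bracketing argument, forbidding every other cross-row pair). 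Because $n=3$ is small, I expect this can be done by exhibiting an explicit periodic template: e.g.\ $w_m$ built from $m$ consecutive blocks $B_1B_2\cdots B_m B_m' \cdots B_2' B_1'$ where $B_j$ and $B_j'$ each list $x_{j,1}x_{j,2}x_{j,3}$ in suitable rotations, and then verifying alternation pairwise. I would then check: (a) each pair $x_{j,1},x_{j,2},x_{j,3}$ alternates three times (the $C_3$ edges); (b) $x_{j,i}$ and $x_{j+1,i}$ alternate; (c) $x_{j,i}$ and $x_{j+1,i'}$ with $i\ne i'$ do \emph{not} alternate; (d) $x_{j,i}$ and $x_{j',i'}$ with $|j-j'|\ge 2$ do \emph{not} alternate. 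Point (c) is the delicate one and is exactly where the cyclic wrap-around (which creates extra adjacencies among $x_{j,1},x_{j,2},x_{j,3}$ compared to the path case) makes the Lemma~\ref{main} argument not transfer verbatim.

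The main obstacle, then, is reconciling two competing demands in a single row: to make $x_{j,1}x_{j,3}$ alternate we must interleave those two letters tightly, but the grafting of row $j+1$ in Lemma~\ref{main} relied on having ``room'' (specifically the factors $x^2x^3$ of adjacent path-vertices) that a tightly-interleaved $C_3$ word may not provide in the same shape. I would resolve this by designing the row word $W_j$ up front with \emph{three} built-in landing factors (one adjacent to each vertex of the triangle) rather than the roughly $k$ landing factors used in the path case, and by being careful that substituting row $m$'s letters into a landing factor of row $m-1$ does not destroy the $C_3$-alternation already present in row $m-1$ (it should not, since each substitution inserts row-$m$ letters only, strictly between two chosen occurrences). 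A clean way to package the verification is to prove, by induction on $m$, a list of bracketing statements of the form $[x_{m,i}^{a},x_{m,i}^{b}]\cap w_{m-1}=\{x_{m-1,i}^{c}\}$ for the relevant occurrences, together with the row-$m$ internal order, and then read off all four alternation conditions (a)--(d) mechanically --- just as in the proof of Lemma~\ref{main}. Once the template and these bracket identities are pinned down, the remaining calculations are routine case-checking over the (finitely many, by periodicity) types of pairs.
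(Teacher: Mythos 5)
Your plan follows the same route as the paper's actual proof: induct on $m$, start from the word $x_{1,1}x_{1,2}x_{1,3}x_{1,1}x_{1,2}x_{1,3}x_{1,1}x_{1,2}x_{1,3}$ for $C_3$, graft row $m$ into $w_{m-1}$ by substituting short factors for designated occurrences of row-$(m-1)$ letters, and then verify adjacencies via bracketing identities of the form $[x_{m,i}^a,x_{m,i}^b]\cap w_{m-1}=\{x_{m-1,i}^c\}$ together with Observation~\ref{evident}. So the strategy is right. But there is a genuine gap: the proof of this theorem \emph{is} the explicit substitution scheme, and you never produce one. You correctly isolate the delicate point --- simultaneously forcing the three vertical edges while killing the diagonal pairs $x_{m,i}x_{m-1,i'}$, $i\ne i'$, in a row whose three letters are already pairwise adjacent --- and then defer it with ``I expect this can be done.'' That existence claim is exactly what must be demonstrated; without the concrete insertion rules none of the bracket identities can even be stated, let alone checked.

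Moreover, the one concrete template you float, $B_1B_2\cdots B_mB_m'\cdots B_2'B_1'$ with each of $B_j,B_j'$ a permutation of $\{x_{j,1},x_{j,2},x_{j,3}\}$, gives each letter only two occurrences, so it is not $3$-uniform, and your guiding heuristic that ``each substitution inserts row-$m$ letters only, as a block straddling one occurrence of $x_{m-1,i}$'' is also not how a working construction looks. In the paper's scheme the nine letters of row $m$ are scattered across four different substitution sites, non-contiguously (e.g.\ $x_{m,1}^2$ is planted after $x_{m-1,3}^2$, not near any occurrence of $x_{m-1,1}$), precisely so that each pair $x_{m,i},x_{m-1,i}$ interleaves as $x_{m-1,i}^1\cdots x_{m,i}^1\cdots x_{m-1,i}^2\cdots x_{m,i}^2\cdots x_{m-1,i}^3\cdots x_{m,i}^3$ while every diagonal pair has two consecutive occurrences of one letter. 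To complete the proof you would need to exhibit such a scheme explicitly and run the finite verification you describe.
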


\begin{proof}
We apply a recursive construction based on the value of $m$. In case of $m=1$, we let
$$w_1=x_{1,1}^1x_{1,2}^1x_{1,3}^1x_{1,1}^2x_{1,2}^2x_{1,3}^2x_{1,1}^3x_{1,2}^3x_{1,3}^3.$$
Evidently, $w_1$ represents $C_3=\mbox{CGr}_{1,3}$.

Having a 3-uniform word $w_{m-1}$ representing $\mbox{CGr}_{m-1,3}$, we obtain the word $w_m$ by the following rules: substitute
\begin{eqnarray*}
x_{m-1,1}^2 & \mbox{ by } &x_{m,1}^1x_{m,2}^1x_{m-1,1}^2\\
x_{m-1,3}^2 & \mbox{ by } & x_{m,3}^1x_{m-1,3}^2x_{m,1}^2; \\
x_{m-1,2}^3 & \mbox{ by } &x_{m,2}^2x_{m,3}^2x_{m,1}^3x_{m-1,2}^3x_{m,2}^3;\\
x_{m-1,3}^3 & \mbox{ by } &x_{m-1,3}^3x_{m,3}^3.
\end{eqnarray*}
Clearly, 
$$w_m\setminus w_{m-1}=x_{m,1}^1x_{m,2}^1x_{m,3}^1x_{m,1}^2x_{m,2}^2x_{m,3}^2x_{m,1}^3x_{m,2}^3x_{m,3}^3$$
represents $C_3$. Note that $x_{m,1}^i<x_{m,2}^i<x_{m,3}^i$ for all $m$ and $i=1,2,3$.

It is straightforward to check that the edges $x_{m,1}x_{m-1,1}, x_{m,2}x_{m-1,2}$ and $x_{m,3}x_{m-1,3}$ exist.

Since $x_{m-2,3}^2<x_{m-1,1}^2$ and $w_m$ contains the factor $x_{m,1}^1x_{m,2}^1x_{m-1,1}^2$, we have $x_{m,i}^1>x_{k,j}^2$ for all $k<m-1$ and  $i,j=1,2,3$. Hence, by Observation~\ref{evident}, no $x_{m,i}^1$ may be adjacent to $x_{k,j}^2$ for $k<m-1$.

Note that $x_{m,1}^2 > x_{m-1,3}^2 > x_{m-1,2}^2$, but $x_{m,1}^3 < x_{m-1,2}^3 < x_{m-1,3}^3$. Therefore, $x_{m,1}$ is not adjacent to $x_{m-1,2}$ or $x_{m-1,3}$. Clearly, $[x_{m,2}^2, x_{m,2}^3] \cap w_{m-1} = \{x_{m-1,2}^3\}$. Due to the presence of the factor $x_{m,3}^1 x_{m-1,3}^2$ in $w_m$, we have $x_{m,3}^1 > x_{m-1,2}^2 > x_{m-1,1}^2$. By Observation~\ref{evident}, this means that $x_{m,3}$ is not adjacent to $x_{m-1,2}$ or $x_{m-1,1}$.
\end{proof}

Prior to proving the result for arbitrary $n \geq 4$, we introduce two words:
$$\mbox{\texttt{Od}}=x_1^1x_n^1x_2^1x_1^2x_3^1x_2^2\ldots x_{n-1}^1x_{n-2}^2x_n^2x_{n-1}^2x_1^3x_2^3\ldots x_{n-2}^3x_n^3x_{n-1}^3;$$
$$\mbox{\texttt{Ev}}=x_1^1x_2^1\ldots x_{n-2}^1x_n^1x_{n-1}^1x_1^2x_n^2x_2^2x_1^3x_3^2x_2^3\ldots x_{n-1}^2x_{n-2}^3x_n^3x_{n-1}^3.$$

Clearly, both these 3-uniform words represent the cycle $C_n$, and \texttt{Ev} is obtained from \texttt{Od} by shifting the permutation $x_1^3x_2^3\ldots x_{n-2}^3x_n^3x_{n-1}^3$
at the beginning of the word. The following properties are easy to check:

(1) Both words start with $x_1$ and satisfy $x_n^i<x_{n-1}^i$  for all $i=1,2,3$.

(2) In \texttt{Od}, $x_{n-2}^1<x_n^2$ while in \texttt{Ev}, $x_{j+1}^2<x_j^3$ for all $j=1,\ldots,n-1$.

(3) In \texttt{Od}, $x_i^3>x_j^2$ while in \texttt{Ev}, $x_i^2>x_j^1$ for all  $i,j=1,\ldots,n$.

\begin{theorem}\label{cyl4}  Cylindrical grid graph $\mbox{CGr}_{m,n}$ is $3$-representable for all $m\geq 1$ and $n\ge 4$.\end{theorem}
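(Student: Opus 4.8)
The plan is to mirror the recursive strategy already used for $\mathrm{CGr}_{m,3}$ in Theorem~\ref{cyl3} and for $\mathrm{Gr}_{m,n}$ in Lemma~\ref{main}, but now using the two auxiliary words \texttt{Od} and \texttt{Ev} (for odd and even rows $m$, respectively) as the building blocks, since their differing structural properties (1)--(3) are exactly what is needed to control adjacencies between consecutive rows. First I would set up the base case: for $m=1$, take $w_1$ to be \texttt{Od} with each $x_i$ replaced by $x_{1,i}$; this represents $C_n=\mathrm{CGr}_{1,n}$ by the observation preceding the theorem. Then, given a $3$-uniform word $w_{m-1}$ representing $\mathrm{CGr}_{m-1,n}$ whose top row $W_{m-1}$ is a copy of \texttt{Od} (if $m-1$ is odd) or \texttt{Ev} (if $m-1$ is even), I would describe explicit substitution rules that splice the letters of row $m$ — arranged so that $w_m\setminus w_{m-1}$ is a copy of \texttt{Ev} or \texttt{Od} respectively — into $w_{m-1}$ by expanding a short list of occurrences of row-$(m-1)$ letters into factors, just as equations $(1)$ and $(2)$ did in Lemma~\ref{main} and the four substitutions did in Theorem~\ref{cyl3}. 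The alternation of parity between rows is essential because it makes property~(3) available in one direction and property~(2) available in the other, which is what lets each newly inserted row letter see exactly one letter of the previous row.

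The key steps, in order, are: (i) write down the substitution rules producing $w_m$ from $w_{m-1}$, distinguishing the cases "$m$ even" and "$m$ odd", and verify that $w_m$ is $3$-uniform and that $w_m\setminus w_{m-1}$ is a verbatim copy of the appropriate auxiliary word, so that row $m$ internally induces $C_n$; (ii) verify, using Observation~\ref{evident} together with the localization facts $[x_{m,i}^a,x_{m,i}^b]\cap w_{m-1}=\{\text{a single letter of row }m-1\}$ forced by the substitution, that each vertex $x_{m,i}$ can be adjacent in $w_{m-1}$ only to $x_{m-1,i}$; (iii) show that the vertical edges $x_{m,i}x_{m-1,i}$ genuinely occur, i.e. the three occurrences of $x_{m,i}$ and of $x_{m-1,i}$ interleave, which is where properties~(1)--(3) of \texttt{Od}/\texttt{Ev} and the precise placement in the substitution rules get used; and (iv) check the wrap-around edges $x_{m,1}x_{m,n}$ and, more delicately, that $x_{m,i}$ is \emph{not} adjacent to the wrong neighbors $x_{m-1,i\pm 1}$, handling the index-$1$/index-$n$ boundary separately as was done at the end of the proof of Theorem~\ref{cyl3}. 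One also needs to confirm that the substitution leaves the top row of $w_m$ as a clean copy of the parity-appropriate auxiliary word so the recursion can continue — this is the analogue of property~(2) of Lemma~\ref{main} being preserved.

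The main obstacle I expect is bookkeeping around the two "special" columns $1$, $n-1$, and $n$, where \texttt{Od} and \texttt{Ev} deviate from the generic pattern (the factors $x_n^2x_{n-1}^2$, $x_{n-2}^2$, and the shifted block $x_1^3\ldots x_{n-2}^3x_n^3x_{n-1}^3$). These irregularities mean the substitution rule cannot be fully uniform across all $t$, and the verification of both "no spurious edge" (step (ii)/(iv)) and "the wanted edge exists" (step (iii)) must be redone by hand for $i\in\{1,n-1,n\}$, exactly as the proof of Theorem~\ref{cyl3} ends with a direct case check. A secondary subtlety is making sure the \emph{parity alternation} is consistent: since \texttt{Ev} is obtained from \texttt{Od} by moving the third-occurrence block to the front, a row built as \texttt{Ev} sits "above" its predecessor in the word in a different way than a row built as \texttt{Od}, and one must check that the substitution for passing from an \texttt{Ev}-row to the next \texttt{Od}-row is correctly dualized from the \texttt{Od}-to-\texttt{Ev} step. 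Once the $i\in\{1,n-1,n\}$ boundary cases are pinned down and the parity-swap is handled symmetrically, the remaining inequalities for generic $i$ follow routinely from properties~(1)--(3) and Observation~\ref{evident}, just as in Lemma~\ref{main}.
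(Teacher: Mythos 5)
Your proposal follows essentially the same route as the paper's proof: the base case $w_1$ is \texttt{Od} relabelled, the recursion alternates between \texttt{Ev} (even $m$) and \texttt{Od} (odd $m$) via a short list of substitution rules expanding occurrences of row-$(m-1)$ letters, spurious adjacencies are excluded by the localization facts $[x_{m,i}^a,x_{m,i}^b]\cap w_{m-1}$ together with Observation~\ref{evident} and properties (1)--(3), the vertical edges are checked by direct interleaving inequalities, and the columns $1$, $n-1$, $n$ are handled as separate boundary cases. This matches the paper's argument in structure and in all the points you flag as delicate, so the plan is sound as outlined.
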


\begin{proof}
For each $m$ we construct a 3-uniform word $w_m$ reprsenting $\mbox{CGr}_{m,n}$ and let $W_n=w_n\setminus w_{n-1}$. In our construction $W_m$ coincides, after replacing $x_{m,i}$ with  $x_i$ for all $j$, with \texttt{Od} if $m$ is odd and with \texttt{Ev} if $m$ is even. Namely, we start with
$$w_1=x_{11}^1x_{1n}^1x_{12}^1x_{11}^2\ldots x_{1,n-1}^1x_{1,n-2}^2x_{1,n}^2x_{1,n-1}^2x_{11}^3\ldots x_{1,n-2}^3x_{1,n}^3x_{1,n-1}^3$$
and apply one of the following constructions for $w_m$ depending on the parity of $m\ge 2$. 

If $m$ is even, we substitute:
\begin{eqnarray*}
x_{m-1,j}^2 & \mbox{ by } & x_{m,j}^1x_{m-1,j}^2  \mbox{ for all } j=1,\ldots,n;\\
x_{m-1,1}^3 & \mbox{ by } & x_{m,1}^2x_{m,n}^2x_{m-1,1}^3; \\
x_{m-1,j}^3 & \mbox{ by } & x_{m,j}^2x_{m,j-1}^3x_{m-1,j}^3 \mbox{ for all } j=2,\ldots,n-2;\\
x_{m-1,n}^3 & \mbox{ by } & x_{m,n-1}^2x_{m,n-2}^3x_{m-1,n}^3;\\
x_{m-1,n-1}^3 & \mbox{ by } & x_{m,n}^3x_{m-1,n-1}^3x_{m,n-1}^3.
\end{eqnarray*}

If $m$ is odd, we substitute:
\begin{eqnarray*}
x_{m-1,1}^2 & \mbox{ by } & x_{m,1}^1x_{m-1,1}^2;\\
x_{m-1,n}^2 & \mbox{ by } & x_{m,n}^1x_{m-1,n}^2 ;\\
x_{m-1,j}^2 & \mbox{ by } & x_{m,j}^1x_{m,j-1}^2x_{m-1,j}^2 \mbox{ for all } j=2,\ldots,n-1;\\
x_{m-1,n}^3 & \mbox{ by } & x_{m,n}^2x_{m,n-1}^2x_{m,1}^3x_{m-1,n}^3;\\
x_{m-1,n-1}^3 & \mbox{ by } & x_{m,2}^3x_{m,3}^3\ldots x_{m,n-2}^3x_{m,n}^3x_{m-1,n-1}^3x_{m,n-1}^3.
\end{eqnarray*}

Let us show that $w_m$ indeed represents $\mathrm{CGr}_{m,n}$. Clearly, $W_m$ coincides with \texttt{Od} and \texttt{Ev} for odd and even values of $m$, respectively; hence, the vertices $x_{m,1}, \ldots, x_{m,n}$ induce the cycle $C_n$. We have to verify that $x_{m,j}$ is adjacent in $w_{m-1}$ to  $x_{m-1,j}$ only for all $ j=1,\ldots,n$. 
Throughout the proof we will use the following easy observation: if $x<y$ and there is a factor $zy$ then $x<z$.
Consider two cases. \\[-3mm]

\noindent
{\bf Case 1.} Let $m\ge 3$ be odd. Note that $w_m$ contains a factor  $ x_{m,1}^1x_{m-1,1}^2$ while $w_{m-1}$ contains a factor $x_{m-1,1}^2x_{m-1,n}^2x_{m-2,1}^3$ (remind that  $w_{m-1}$ was constructed by the rules of even $m-1$). Therefore, $[x_{m,1}^1,x_{m-2,1}^3]\cap w_{m-2}=\emptyset.$ By property~3), $x_{m-2,1}^3>x_{m-2,j}^2$, and thus,
 $x_{m,1}^1>x_{m-2,j}^2$ for all $j=1,\ldots,n$. Hence, by Observation~\ref{evident}, no $x_{m,j}$ is adjacent to any $x\in w_{m-2}$.

Clearly, $[x_{m,n}^2,x_{m,n}^3]\cap w_{m-1}=\{x_{m-1,n}^3\}$; so, $x_{m,n}$ maybe adjacent in $w_{m-1}$ only to $x_{m-1,n}$. We have $[x_{m,1}^1,x_{m,1}^2]\cap w_{m-1} =
\{ x_{m-1,1}^2,x_{m-1,n}^2 \}$, but $x_{m,1}^3<x_{m-1,n}^3$. So, $x_{m,1}$ and $x_{m-1,n}$ are not adjacent. Similarly, 
$[x_{m,n-1}^2,x_{m,n-1}^3]\cap w_{m-1} = \{ x_{m-1,n}^3,x_{m-1,n-1}^3 \}$, but $x_{m-1,n}^2<x_{m,n-1}^1$ and hence  $x_{m,n-1}$ and $x_{m-1,n}$ are not adjacent. 
Let $j\in [2,n-2]$. Note that in  \texttt{Ev} we have $[x_j^2,x_{j+1}^2] =\{ x_{j-1}^3\}$. Therefore, $[x_{m,j}^1,x_{m,j}^2]\cap w_{m-1} =\{ x_{m-1,j}^2,x_{m-1,j-1}^3\}$.
Since by Observation~\ref{evident}, $x_{m,j}$ cannot be adjacent in to $x_{m-1,j-1}$, the only possible neighbor of  $x_{m,j}$ in $w_{m-1}$ is $x_{m-1,j}$.

It remains to verify that the edges $x_{m,j}x_{m-1,j}$ exist for all $j=1,\ldots,n$. Clearly,  $x_{m-1,j}^3<x_{m,j}^3$ for all $j$. Since
$[x_{m,j}^1,x_{m-1,j}^2]\cap w_{m-1}$ is either empty or contains $x_{m,j-1}^2$ only, we have $x_{m-1,j}^1<x_{m,j}^1<x_{m-1,j}^2<x_{m,j}^2$. 
The inequalities $x_{m,n}^2<x_{m-1,n}^3$ and  $x_{m,n-1}^2<x_{m-1,n-1}^3$ are evident. Finally, for all $j=1,\ldots,n-2$ by property~(2) we have
$x_{m,j}^2<x_{m-1,j+1}^2<x_{m-1,j}^3$.  So, $x_{m,j}x_{m-1,j}\in E$  for all $j=1,\ldots, n$. \\[-3mm]

\noindent
{\bf Case 2. } Let $m\ge 2$ be even. If $m\ge 4$ then by the previous case, $x_{m,1}^1>x_{m-1,1}^1>x_{m-3,j}^2$ for all $j$ and so, there are no edges between 
 $x_{m,i}$ and $w_{m-3}$. 

Note that $w_m$ contains a factor  $x_{m,1}^2x_{m,n}^2x_{m-1,1}^3$ while $w_{m-1}$ contains a factor $x_{m-1,1}^3x_{m-2,n}^3$. Hence,
 for all $i=1,\ldots,n$ and $j=1,\ldots,n-2$ we have $x_{m,i}^2>x_{m,1}^2>x_{m-2,j}^3$ and by Observation~\ref{evident}, $x_{m,i}x_{m-2,j}\not\in E$. 
Since $w_m$ has a factor $x_{m,1}^1x_{m-1,1}^2$ and $w_{m-1}$ has factors  $x_{m-1,1}^2x_{m-2,2}^2$ and $x_{m-1,n}^1x_{m-2,n}^2$, and $x_n^1<x_2^2$ in \texttt{Od}, we derive
$x_{m,i}^1>x_{m,1}^1>x_{m-2,n}^2$ and  by Observation~\ref{evident}, $x_{m,i}x_{m-2,n}\not\in E$ for all $i=1,\ldots,n$. By property~3) we have
$x_{m-2,n-1}^1<x_{m-2,1}^2<x_{m-1,1}^2$. Since there is a factor $x_{m,1}^1x_{m-1,1}^2$ in $w_m$, for all $i=1,\ldots,n$ the inequality $x_{m-2,n-1}^1<x_{m,i}^1$ holds.
Since $w_{m-1}$ ends with a factor $x_{m-2,n-1}^3x_{m-1,n-1}^3$ and $x_{m-1,n-1}^3$ was substituted by $ x_{m,n}^3x_{m-1,n-1}^3x_{m,n-1}^3$ in $w_m$, we have $x_{m-2,n-1}^3>x_{m,i}^3$ for all $i=1,\ldots, n-2$ and  by definition, $x_{m,i}x_{m-2,n-1}\not\in E$ for these $i$. Finally, $w_{m-1}$ and $w_m$ have factors 
$ x_{m-1,n-1}^1x_{m-1,n-2}^2x_{m-2,n-1}^2$ and $x_{m,n}^1x_{m-1,n}^2$, respectively. By property~(2), $x_{m-1,n-2}^1<x_{m-1,n}^2$ and hence by property~(1)
$x_{m-2,n-1}^2<x_{m,n}^1<x_{m,n-1}^1$. So, by Observation~\ref{evident}, $x_{m-2,n-1}$ cannot be adjacent to $x_{m,n}$ and $x_{m,n-1}$. So, no edges connect $W_m$ with $w_{m-2}$.

Since \texttt{Od} contains the permutation $x_1^3x_2^3\ldots x_{n-2}^3x_n^3x_{n-1}^3$, we have $[x_{m,j}^2,x_{m,j}^3]\cap W_{m-1}=\{x_{m-1,j}^3\}$ for all $j=1,\ldots,n-2$. 
Clearly,  $[x_{m,n}^1,x_{m,n}^2]\cap W_{m-1}=\{x_{m-1,n}^2,x_{m-1,n-1}^2\}$, but $x_{m,n}^3<x_{m-1,n-1}^3$, and thus $x_{m,n}x_{m-1,n-1}\not\in E$.
Finally,  $[x_{m,n-1}^2,x_{m,n-1}^3]\cap W_{m-1}=\{x_{m-1,n}^3,x_{m-1,n-1}^3\}$. By property~(1), $x_{m-1,n}^2<x_{m-1,n-1}^2$. Since $w_m$ has a factor
$x_{m,n-1}^1x_{m-1,n-1}^2$, we have $x_{m-1,n}^2<x_{m,n-1}^1$. So,  by Observation~\ref{evident}, $x_{m,n-1}x_{m-1,n}\not\in E$.

It remains to verify that the edges $x_{m,j}x_{m-1,j}$ exist for all $j=1,\ldots,n$. The factors $x_{m,j}^1x_{m-1,j}^2$ imply that 
$x_{m-1,j}^1<x_{m,j}^1<x_{m-1,j}^2<x_{m,j}^2$ for all $j$. The inequalities $x_{m,j}^2<x_{m-1,j}^3<x_{m,j}^3$ can be seen directly from the construction. So, 
 $x_{m,j}x_{m-1,j}\in E$ for all $j=1,\ldots, n$.
\end{proof}

\begin{figure}
\begin{center}
\begin{tabular}{ccc}
\begin{tikzpicture}[scale=1.2, every node/.style={circle, draw, inner sep=0.5pt}]
  \def\rows{3}
  \def\cols{5}

  \foreach \i in {1,2,3} {
    \foreach \j in {1,2,3,4,5} {
      \node (x\i\j) at (\j,-\i) {$x_{\i,\j}$};
    }
  }

  \foreach \i in {1,2,3} {
    \foreach \j in {1,2,3,4} {
      \draw (x\i\j) -- (x\i\the\numexpr\j+1\relax);
    }
  }

  \foreach \i in {1,2} {
    \foreach \j in {1,2,3,4,5} {
      \draw (x\i\j) -- (x\the\numexpr\i+1\relax\j);
    }
  }
  
\draw[bend left=25] (x11) to (x15); 
\draw[bend left=25] (x21) to (x25); 
\draw[bend left=25] (x31) to (x35); 

\draw[bend left=30] (x11) to (x31); 
\draw[bend left=30] (x12) to (x32); 
\draw[bend left=30] (x13) to (x33); 
\draw[bend left=30] (x14) to (x34); 
\draw[bend left=30] (x15) to (x35); 
 
\end{tikzpicture}

& & 

\begin{tikzpicture}[scale=1.2, every node/.style={circle, draw, inner sep=0.5pt}]
  \def\rows{4}
  \def\cols{4}

  \foreach \i in {1,2,3,4} {
    \foreach \j in {1,2,3,4} {
      \node (x\i\j) at (\j,-\i) {$x_{\i,\j}$};
    }
  }

  \foreach \i in {1,2,3,4} {
    \foreach \j in {1,2,3} {
      \draw (x\i\j) -- (x\i\the\numexpr\j+1\relax);
    }
  }

  \foreach \i in {1,2,3} {
    \foreach \j in {1,2,3,4} {
      \draw (x\i\j) -- (x\the\numexpr\i+1\relax\j);
    }
  }

\draw[bend left=25] (x11) to (x14); 
\draw[bend left=25] (x21) to (x24); 
\draw[bend left=25] (x31) to (x34); 
\draw[bend left=25] (x41) to (x44); 

\draw[bend left=30] (x11) to (x41); 
\draw[bend left=30] (x12) to (x42); 
\draw[bend left=30] (x13) to (x43); 
\draw[bend left=30] (x14) to (x44); 


\end{tikzpicture}

\end{tabular}

\caption{The toroidal grid graphs $\mbox{TGr}_{3,5}$ and $\mbox{TGR}_{4,4}$}\label{3x5-torus-grid-graph}
\end{center}
\end{figure}

\section{Open problems}\label{open}

\noindent
A {\em toroidal grid graph} $\mbox{TGr}_{m,n}$ is obtained from the cylindric grid graph $\mbox{CGr}_{m,n}$ by adding edges $x_{1,j}x_{m,j}$ for all $j=1,\ldots,n$, where we assume $m,n\geq 3$. In other words, $\mbox{TGr}_{m,n}$  is the graph formed from the Cartesian product of the cycle graphs $C_m$ and $C_n$. The toroidal grid graphs $\mbox{TGr}_{3,5}$ and  $\mbox{TGr}_{4,4}$ are shown in Figure~\ref{3x5-torus-grid-graph}.  Since $C_n$ is word-representable, $\mbox{TGr}_{m,n}$ is word-representable as the Cartesian product of two word-representable graphs \cite{KL15}. However, what can be said about $\mathcal{R}(\mbox{TGr}_{m,n})$? We were able to find 3-representants for $\mbox{TGr}_{3,3}$ and $\mbox{TGr}_{3,4}$ (for brevity, we rename the vertices $x_{1,1}$, $x_{2,1}$, $x_{3,1}$,  $x_{1,2}$, $x_{2,2}$, $x_{3,2},\ldots$ as $a$, $b$, $c, d, e,f,\ldots$, respectively):
$$w(\mbox{TGr}_{3,3})=abcdefgadhigbcaehbfdeighcfi;$$
$$w(\mbox{TGr}_{3,4})=ajbkcdaeblfcgdjahkigehfdbelcifjgkhli.$$
However, no 3-representation for larger toroidal grid graphs are known. So, we would like to pose the following conjecture.

\begin{conjecture}
If $n,m\ge 3$ and $m+n\ge 8$ then  $\mathcal{R}(\mbox{TGr}_{m,n})\ge 4.$
\end{conjecture}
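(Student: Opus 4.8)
\emph{Proof proposal.} The plan is to argue by contradiction: assume $w$ is a $3$-uniform word representing $\mathrm{TGr}_{m,n}$ with $m,n\ge 3$ and $m+n\ge 8$, and extract enough constraints on the cyclic order of the $3mn$ occurrences to reach a contradiction. The point that must be faced at the outset is that $\mathrm{TGr}_{3,3}$ and $\mathrm{TGr}_{3,4}$ \emph{are} $3$-representable (witnesses were exhibited above), so no purely local, induced-subgraph obstruction can suffice: the argument has to exploit that the graph wraps around in \emph{both} directions sufficiently many times. Accordingly, the strategy is to isolate rigid local structure, and then show that when $m+n\ge 8$ there is not enough "room" on the torus to propagate it consistently all the way around.

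First I would collect the structured sub-objects that $w$ must simultaneously represent. For each $i$ the row-cycle $R_i=\{x_{i,1},\dots,x_{i,n}\}$ induces $C_n$, so $w|_{R_i}$ is a $3$-representation of $C_n$; likewise $w|_{S_j}$ is a $3$-representation of $C_m$ for each column-cycle $S_j$. More usefully, for each $i$ (indices mod $m$) the set $R_i\cup R_{i+1}$ induces the prism $\mathrm{Pr}_n \cong C_n \,\square\, K_2$, whose representation number is known to be exactly $3$ \cite{KitPya08}; similarly $S_j\cup S_{j+1}$ induces $\mathrm{Pr}_m$. Thus $w$ decomposes, in two different cyclic ways, into families of overlapping genuine $3$-representations of prisms. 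The technical heart of the proof would then be a rigidity (normal-form) lemma: up to the symmetries available for a uniform word (reversal, the dihedral symmetry of $\mathrm{Pr}_\ell$, and relabelling), classify the ways in which the $3\ell$ occurrences of a $3$-representation of $\mathrm{Pr}_\ell$ with $\ell$ large can be cyclically ordered, and in particular how the occurrences of the two rim cycles must interleave. This is a finite but intricate case analysis, in the same spirit as the three-case enumeration in the proof of Lemma~\ref{lem-Gr33} but considerably heavier.

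With such a classification in hand, I would glue the pieces. Fix a vertex $v=x_{i,j}$. For $m,n\ge 4$ its neighbourhood $N(v)=\{x_{i\pm1,j},x_{i,j\pm1}\}$ is independent, and the eight boundary vertices of the $3\times 3$ block around $v$ induce a cycle $C_8$ in which $v$ is joined to an alternating set of four vertices. Applying the prism-rigidity lemma to $R_{i-1}\cup R_i$, $R_i\cup R_{i+1}$, $S_{j-1}\cup S_j$ and $S_j\cup S_{j+1}$ pins down the relative order of the occurrences on this nine-vertex configuration; translating the block around the torus then forces these local orders to be simultaneously consistent around two independent cycles of lengths $m$ and $n$, which should fail precisely when $m+n\ge 8$ (the inequality excluding exactly the two small tori that are $3$-representable). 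The cases $m=3$ (so $n\ge 5$) and, symmetrically, $n=3$ must be treated separately, since there $N(v)$ is no longer independent; in those cases one instead works with $\mathrm{CGr}_{3,n}$-type sub-blocks and uses the extra triangles $x_{1,j}x_{2,j}x_{3,j}$, together with the already-established $3$-representability analysis of cylindric grids (Theorems~\ref{cyl3} and~\ref{cyl4}) to understand what the row-of-triangles part of $w$ can look like.

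The principal obstacle, as indicated, is the global-versus-local tension: each prism restriction is individually perfectly consistent, and only the requirement that $\Theta(m+n)$ of them mesh around two cycles is contradictory, so there is no short certificate and the rigidity lemma must be sharp enough to let constraints propagate without loss all the way around. A secondary difficulty is bookkeeping the symmetries so that the case analysis stays genuinely finite and human-checkable; a reasonable fallback, if a fully hand-verified argument proves unwieldy, is a computer-assisted check that the base cases $\mathrm{TGr}_{3,5}$ and $\mathrm{TGr}_{4,4}$ admit no $3$-representation, combined with a clean inductive step that reduces larger tori to these via the prism-gluing machinery.
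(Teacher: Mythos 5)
This statement is posed in the paper as an open conjecture: the authors give no proof, only the two small $3$-representable cases $\mathrm{TGr}_{3,3}$ and $\mathrm{TGr}_{3,4}$ as evidence that the bound $m+n\ge 8$ is where the behaviour should change. So there is no "paper proof" to compare against, and your submission must stand on its own as a proof --- which it does not. What you have written is a research plan, and every load-bearing step in it is left unexecuted. The "rigidity (normal-form) lemma" classifying $3$-representations of $\mathrm{Pr}_\ell$ is neither stated precisely nor proved; without it the gluing argument cannot even be formulated. The gluing step itself is summarized as "should fail precisely when $m+n\ge 8$," which is not an argument but a restatement of the conjecture. Nothing in the sketch explains why the propagation of local constraints around two cycles succeeds for $(m,n)=(3,4)$ (where $m+n=7$ and a $3$-representant exists) yet fails for $(3,5)$ and $(4,4)$; any correct proof must locate the mechanism that distinguishes these, and your outline contains no candidate for it. The proposed fallback --- a computer check of the base cases $\mathrm{TGr}_{3,5}$ and $\mathrm{TGr}_{4,4}$ plus an inductive step --- is likewise only named, not carried out, and the inductive step would itself need a proof that non-$3$-representability propagates from a subtorus to a larger one, which is false for induced subgraphs in general (a larger torus does not contain a smaller torus as an induced subgraph, so no hereditary argument applies directly).

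On the positive side, the ingredients you identify are sensible starting points: the row and column prisms $R_i\cup R_{i+1}\cong \mathrm{Pr}_n$ and $S_j\cup S_{j+1}\cong \mathrm{Pr}_m$ are indeed induced subgraphs, each forcing a genuine $3$-representation on its restriction, and exploiting the tension between the two transverse families of prisms is a plausible route. You are also right that the $m=3$ case needs separate handling because the column cycles become triangles. But until the prism classification is actually established and the consistency obstruction is exhibited explicitly (with the threshold $m+n\ge 8$ emerging from the argument rather than being imposed on it), this remains a proposal for attacking the conjecture, not a proof of it.
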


Note that the toroidal grid graph  $\mbox{TGr}_{4,4}$ is isomorphic to a 4-dimensional cube, for which a 4-representation is known \cite{BroZan19},  but the (non)-existence of 3-representation is an open problem \cite{HHMO24}.

\section*{Acknowledgments} The third author's work was supported by the research project of the Sobolev Institute of Mathematics (project FWNF-2022-0019).

\end{document}